\newtheorem{thm}{Theorem}[section]
\newtheorem{cor}[thm]{Corollary}
\newtheorem{lem}[thm]{Lemma}
\newtheorem{prop}[thm]{Proposition}
\newtheorem{que}[thm]{Question}
\newtheorem{rmk}[thm]{Remark}
\newcommand{\p}{\prime}
\newcommand{\R}{\mathbb{R}}
\newcommand{\Z}{\mathbb{Z}}
\newcommand{\C}{\mathbb{C}}
\newcommand{\Ha}{\mathbb{H}}
\newcommand{\con}{\equiv}
\newcommand{\HH}{\mathcal{H}}
\newcommand{\GG}{\mathcal{G}}
\newcommand{\Mod}[1]{\,(\operatorname{mod}\, #1)}
\newcommand{\inj}{\hookrightarrow}
\newcommand{\comment}[1]{}
\newcommand{\bi}{\begin{itemize}}
\newcommand{\ei}{\end{itemize}}
\newcommand{\ben}{\begin{enumerate}}
\newcommand{\een}{\end{enumerate}}
\newcommand{\be}{\begin{equation}}
\newcommand{\ee}{\end{equation}}
\newcommand{\bea}{\begin{eqnarray}}
\newcommand{\eea}{\end{eqnarray}}
\newcommand{\bal}{\begin{align}}
\newcommand{\eal}{\end{align}}
\newcommand{\ba}{\begin{array}}
\newcommand{\ea}{\end{array}}
\newcommand{\nn}{\nonumber}
\DeclareMathOperator{\GL}{GL}
\DeclareMathOperator{\SO}{SO}
\DeclareMathOperator{\Sk}{Sk}
\DeclareMathOperator{\Tr}{Tr}
\DeclareMathOperator{\Ind}{Ind}
\DeclareMathOperator{\Res}{Res}
\newcommand{\sS}{\mathcal{S}}
\newcommand{\tT}{\mathcal{T}}
\begin{document}
\title{Word-Induced Measures on Compact Groups}
\author{Gene S. Kopp and John D. Wiltshire-Gordon}
\maketitle
\begin{abstract}
Consider a group word $w$ in $n$ letters.  For a compact group $G$, $w$ induces a map $G^n \to G$ and thus a pushforward measure $\mu_w$ on $G$ from the Haar measure on $G^n$.  We associate to each word $w$ a $2$-dimensional cell complex $X(w)$ and prove in Theorem \ref{measures} that $\mu_w$ is determined by the topology of $X(w)$.  The proof makes use of non-abelian cohomology and Nielsen's classification of automorphisms of free groups \cite{nielsen}.  Focusing on the case when $X(w)$ is a surface, we rediscover representation-theoretic formulas for $\mu_w$ that were derived by Witten in the context of quantum gauge theory \cite{witten}.  These formulas generalize a result of Erd\H{o}s and Tur\'{a}n on the probability that two random elements of a finite group commute \cite{erdos}.  As another corollary, we give an elementary proof that the dimension of an irreducible complex representation of a finite group divides the order of the group; the only ingredients are Schur's lemma, basic counting, and a divisibility argument.
\end{abstract}

\section{Introduction}

How do we measure the degree to which a group is abelian?  Several parameters come to mind, each with a distinct flavor: 
\begin{itemize}
\item[(1)] the index of the derived subgroup;
\item[(2)] the average size of a conjugacy class;
\item[(3)] the dimensions of its largest irreducible representations; or
\item[(4)] the probability that two (uniformly chosen) elements commute.
\end{itemize}

\noindent So how abelian is the quaternion group $Q_8$?  Its derived subgroup has index $4$; its average conjugacy class has $8/5$ elements; it has irreducible representations of dimensions $1,1,1,1,$ and $2$.  Let's check the probability that two elements commute by calculating the commutator of every pair of elements:

\be
\begin{array}{r|cccccccc}
 [t_1,t_2] & 1 & i & j & k & -1 & -i & -j & -k \\
 \hline
 1 & 1 & 1 & 1 & 1 & 1 & 1 & 1 & 1 \\
 i & 1 & 1 & -1 & -1 & 1 & 1 & -1 & -1 \\
 j & 1 & -1 & 1 & -1 & 1 & -1 & 1 & -1 \\
 k & 1 & -1 & -1 & 1 & 1 & -1 & -1 & 1 \\
 -1 & 1 & 1 & 1 & 1 & 1 & 1 & 1 & 1 \\
 -i & 1 & 1 & -1 & -1 & 1 & 1 & -1 & -1 \\
 -j & 1 & -1 & 1 & -1 & 1 & -1 & 1 & -1 \\
 -k & 1 & -1 & -1 & 1 & 1 & -1 & -1 & 1
\end{array}
\ee
The table contains the identity element exactly 40 times, so the probability that two elements commute is $5/8$.  It is no coincidence that this number is the reciprocal of the average size of a conjugacy class, since Erd\H{o}s and Tur\'{a}n \cite{erdos} prove this to be true for finite groups in general.

More generally, we may wonder about group identities beyond commutation.  A \textbf{group word} $w$ on $n$ letters is an element of the free group $F_n$.  Given any $n$-tuple $\vec{g} \in G^n$ of elements in some group $G$, the universal property of $F_n$ provides an element called $w(\vec{g})$.  If $w(\vec{g})$ is the identity of $G$, we say that $w$ is \textbf{satisfied} at $t$.  With this terminology, we may ask a fundamental question of statistical group theory:
\begin{que} How frequently is a given word satisfied in a given group?
\end{que}

\noindent Here is notation for the quantity in question:

\be \gamma_G(w) = \# \left\{ t \in G^n \, \middle| \, w(t) = 1 \right\} . \ee
For example, we have already computed $\gamma_{Q_8} ([g_1,g_2]) = 40$.


Changing perspective somewhat, consider a compact group $G$ and a word $w \in F_n$.  We define a measure $\mu_w$ on $G$ as follows: for a Haar-measurable function $f : G \to \C$,
\be 
\int_G f d\mu_w := \int_{G^n} f(w(g_1, \ldots, g_n))dg_1 \cdots dg_n,
\ee
where the integral on the right is taken with respect to the normalized Haar measure on $G^n$.  In the finite case, $\mu_w$ is related to $\gamma_G(w)$ by
\be 
\int_G \delta_1 d\mu_w = |G|\mu_w(\{1\}) = |G|^{1-n} \gamma_G(w),
\ee
where $\delta_1$ denotes a point-mass at the identity of $G$.  

\section{Interpreting Group Words Topologically}

\noindent One way to understand group words is topological, since there's a natural way to build a cell complex out of a word.  If $w(a,b)=abca^{-1}b^{-1}c^{-1}$, for instance, we obtain the following space.

\begin{center} \includegraphics[scale=.5]{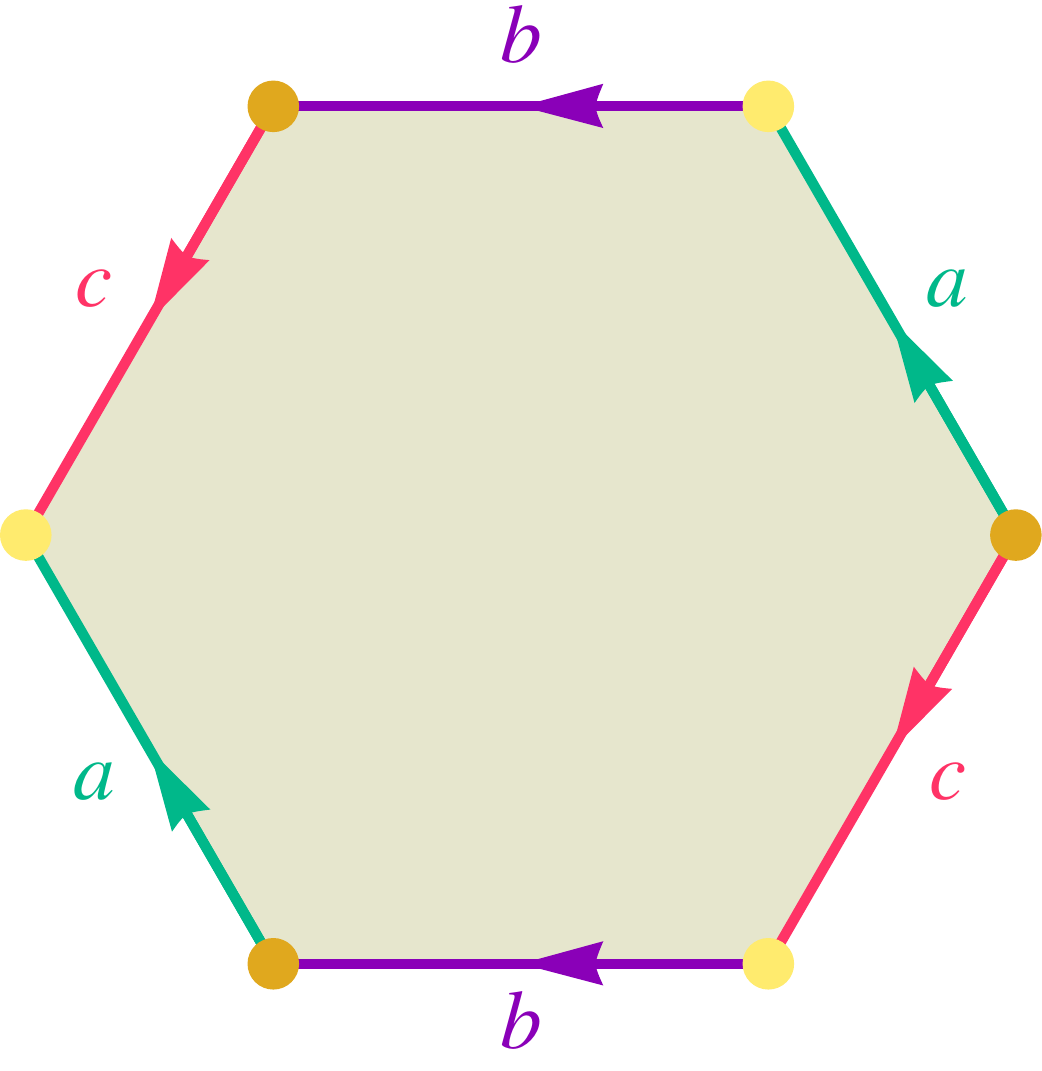} \end{center}

\noindent The space consists of a cycle built out of $w$ and a single $2$-cell attached along it, modulo the identifications of $1$-cells given by the word.  The attaching map from the boundary of the $2$-cell is called the \textbf{word polygon}.

We start with an easy topological lemma about such spaces:
\begin{lem}\label{skeleton}
Let $w_1$ and $w_2$ be words, and let $\delta_1$ and $\delta_2$ be the word polygons.  If $X(w_1)$ and $X(w_2)$ are homeomorphic, then there's a homotopy equivalence between the $1$-skeletons
\be 
h: \Sk_1 X(w_1) \to \Sk_1 X(w_2)
\ee
for which $h \mbox{\small{\,$\circ$\,}} \delta_1$ and $\delta_2$ are homotopic.
\end{lem}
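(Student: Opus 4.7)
The plan is to exploit the fact that puncturing $X(w)$ at a single interior point $p$ of its $2$-cell leaves a space that deformation retracts onto $\Sk_1 X(w)$ via radial projection of the punctured disk onto its boundary. I would like to choose $p_1$ in the interior $U_1$ of the $2$-cell of $X(w_1)$ whose image $p_2 := \phi(p_1)$ under the given homeomorphism $\phi \colon X(w_1) \to X(w_2)$ lies in the interior $U_2$ of the $2$-cell of $X(w_2)$. Then the restriction $\phi \colon X(w_1) \setminus \{p_1\} \to X(w_2) \setminus \{p_2\}$ is a homeomorphism of punctured spaces, and composing it with the inclusion $\iota_1 \colon \Sk_1 X(w_1) \hookrightarrow X(w_1) \setminus \{p_1\}$ on one side and the radial retraction $r_2 \colon X(w_2) \setminus \{p_2\} \to \Sk_1 X(w_2)$ on the other will yield the desired $h$.

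Such a $p_1$ exists: the open set $U_1$ is homeomorphic to an open $2$-disk while every subspace of the $1$-dimensional complex $\Sk_1 X(w_2)$ has topological dimension at most $1$, so the open set $\phi(U_1) \subseteq X(w_2)$ cannot be contained in $\Sk_1 X(w_2)$, forcing $\phi(U_1) \cap U_2 \neq \emptyset$. Having fixed such $p_1$ and $p_2$, I set
\[
h := r_2 \circ \bigl(\phi|_{X(w_1) \setminus \{p_1\}}\bigr) \circ \iota_1 \colon \Sk_1 X(w_1) \to \Sk_1 X(w_2).
\]
Since $\iota_1$ and $r_2$ are homotopy equivalences and $\phi$ is a homeomorphism, $h$ is a homotopy equivalence.

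To verify $h \circ \delta_1 \simeq \delta_2$, I would use continuity of $\phi$ to choose a small closed disk $D \subseteq U_1$ with $p_1 \in \mathrm{int}(D)$ and $\phi(D) \subseteq U_2$, and set $C_1 := \partial D$. By construction of the radial retraction, $r_1 \circ C_1 = \delta_1$ up to reparametrization, so $\iota_1 \circ \delta_1 \simeq C_1$ inside $X(w_1) \setminus \{p_1\}$; analogously, because $\phi(C_1)$ bounds the disk $\phi(D) \subseteq U_2$ around $p_2$, the same argument applied to $X(w_2)$ gives $r_2 \circ \phi(C_1) \simeq \delta_2$ in $\Sk_1 X(w_2)$. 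Chaining these homotopies yields
\[
h \circ \delta_1 \simeq r_2 \circ \phi(C_1) \simeq \delta_2.
\]
I expect the main obstacle to be the invariance-of-dimension argument that guarantees $\phi(U_1) \cap U_2 \neq \emptyset$; a secondary subtlety is orientation bookkeeping, which in the worst case yields $\delta_2^{-1}$ rather than $\delta_2$ and can be absorbed by reversing the orientation of the attaching map (which does not change the underlying space).
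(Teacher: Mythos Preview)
Your argument is correct and follows essentially the same route as the paper: puncture each $X(w_i)$ at a point in the interior of the $2$-cell chosen so that the homeomorphism matches the punctures, then use the radial deformation retractions onto the $1$-skeletons to produce $h$, and track the word polygons via small circles around the punctures. You are in fact a bit more careful than the paper in justifying the existence of such a puncture point (via invariance of dimension) and in noting the orientation ambiguity.
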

\begin{proof}
Let $f : \xymatrix{X(w_1) \ar[r]^\sim & X(w_2)}$ be the homeomorphism.  Pick a point $x_2 \in X_2$ that doesn't lie in $h(\Sk_1 X_1) \cup \Sk_1 X_2$, and let $x_1 := f^{-1}(x_2)$.  Puncture each space at $x_i$; now $f$ restricts to a homeomorphism $\bar{f}$ between $X_1 \setminus \{x_1\}$ and $X_2 \setminus \{x_2\}$.  Each $X_i \setminus \{x_i\}$ has an easy deformation retraction to $\Sk_i X_i$, given by enlarging the hole.  The two $1$-skeletons are now deformation retracts of homeomorphic spaces, so we get a homotopy equivalence between them.  The word polygon $\delta_1$ is homotopic to a small circle around $x_1$, which is taken by $\bar{f}$ to a small circle around $x_2$; this new circle is homotopic to $\delta_2$.  Thus, we have a homotopy between $h \mbox{\small{\,$\circ$\,}} \delta_1$ and $\delta_2$.
\end{proof}

For example, when we puncture the cell complex we examined earlier, we see that the blue circle is homotopic to the word polygon.  
\begin{center} 
\includegraphics[scale=.4]{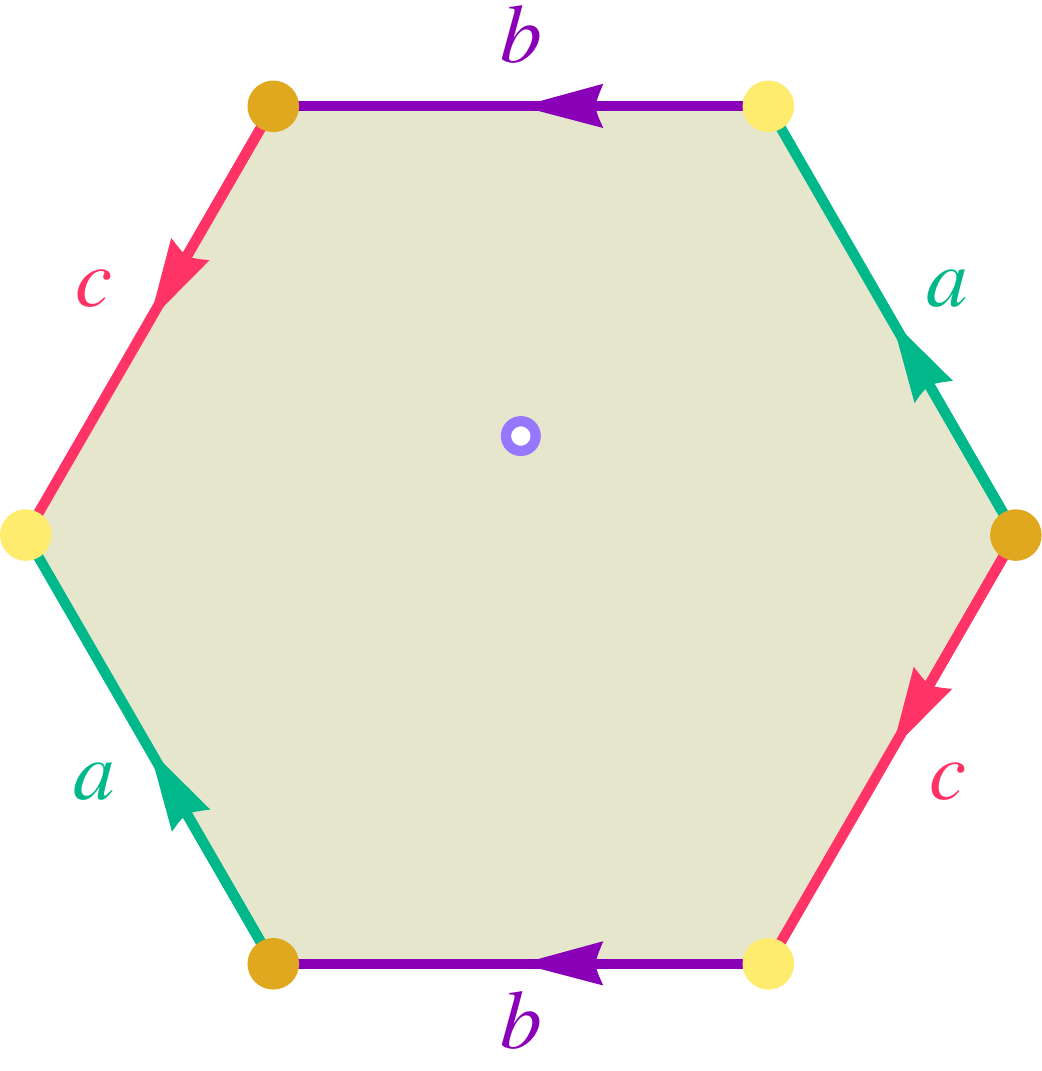} \ \ \ \ \ 
\includegraphics[scale=.4]{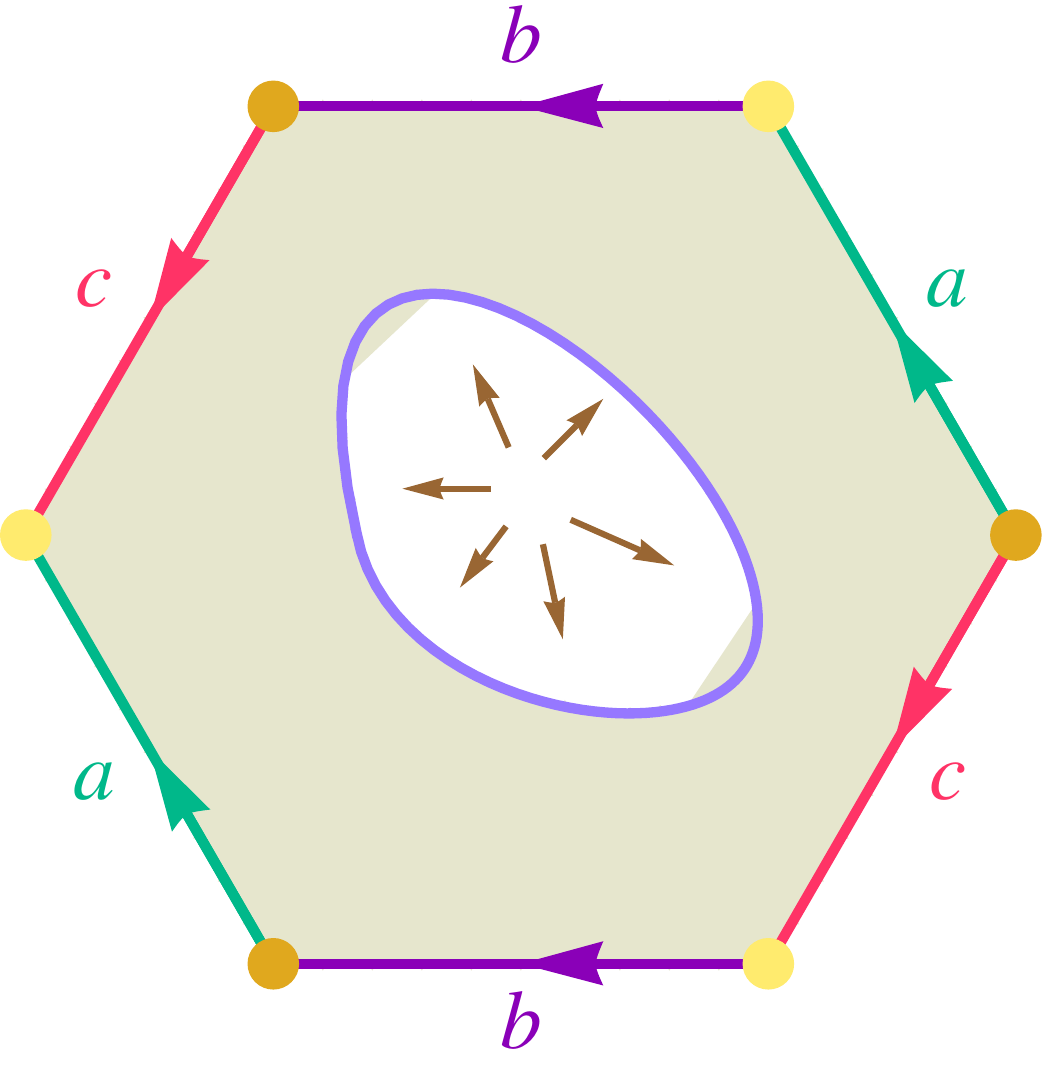} \ \ \ \ \ 
\includegraphics[scale=.4]{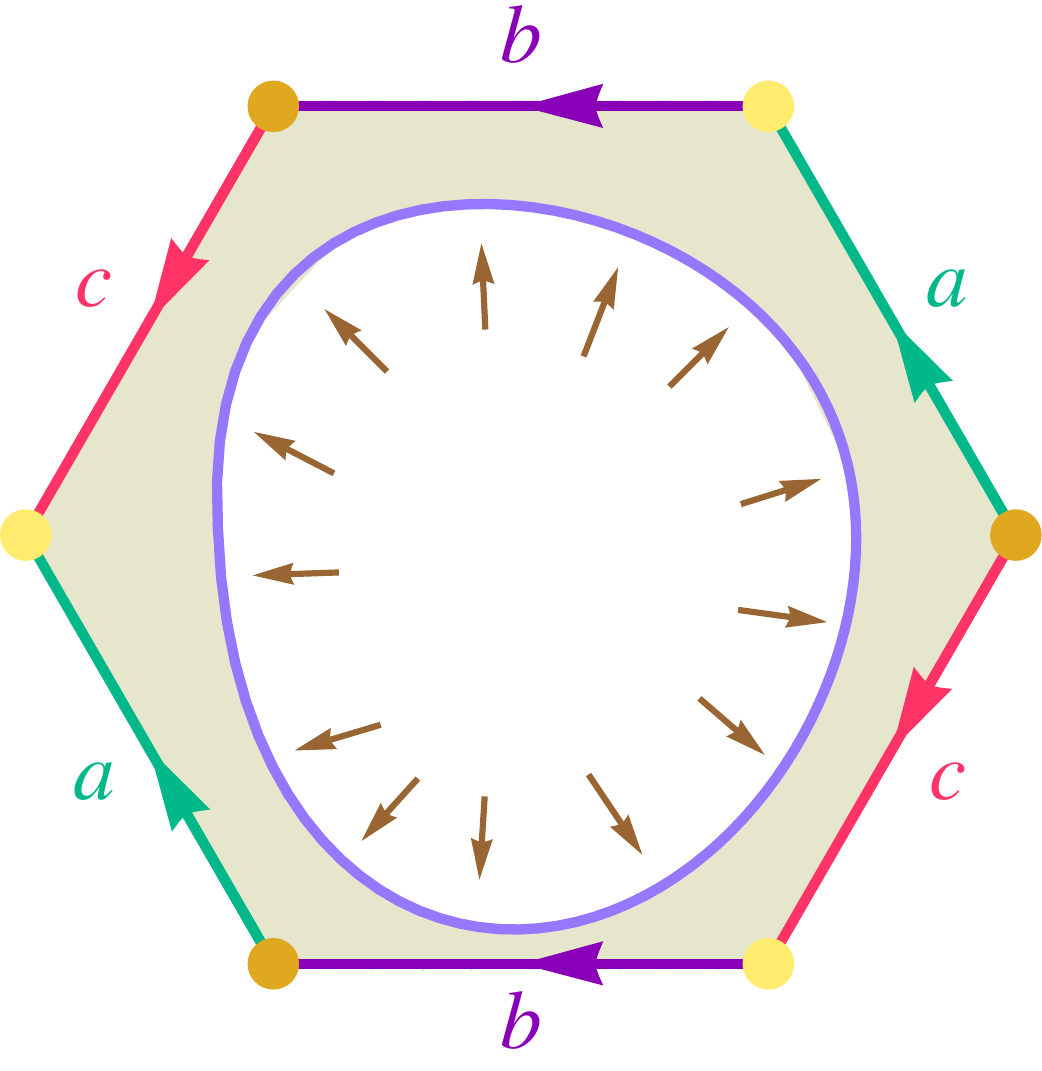} 
\end{center}

To continue this topological interpretation of group words, we must find a procedure analogous to evaluating a word at an $n$-tuple $\vec{g} \in G^n$.  To this end, we label each $1$-cell in $X(w)$ with the corresponding coordinate of $\vec{g}$, obtaining a cellular $1$-cocycle with coefficients in $G$.  The set of such $1$-cocycles inherits a natural measure structure from the normalized Haar measure on $G^n$.  We equip the set of cohomology classes with the natural quotient measure.

\begin{prop}\label{oneedge}
Let $\GG$ be a finite connected graph containing an edge $e_1$ between distinct vertices.  Let $\HH$ be quotient space obtained by modding out by $e_1$.
Then, there is a homotopy equivalence
\be 
\xymatrix{
\GG \ar@/^.5pc/[r]^r & \ar@/^.5pc/[l]^s \HH
}
\ee
so that the induced isomorphisms on non-abelian cohomology
\be 
\xymatrix{
H^1(\GG,G) \ar@/_.5pc/[r]_{s^*} & \ar@/_.5pc/[l]_{r^*} H^1(\HH,G)
}
\ee
are measure-preserving.
\end{prop}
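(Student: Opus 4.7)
The plan is to write down an explicit homotopy inverse $s: \HH \to \GG$ to the collapse map $r$, compute the induced cochain pullback $s^*$ explicitly, verify that it preserves Haar at the cochain level, and conclude measure-preservation on $H^1$ by a commuting-square argument.

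Let $v_1, v_2$ be the endpoints of $e_1$ and let $v \in \HH$ be their common image. Define $s(v) = v_1$, and $s(u) = u$ for all other vertices; for each edge $e \in E(\HH) = E(\GG)\setminus\{e_1\}$, let $s(e)$ be the cellular path in $\GG$ obtained from $e$ by pre- or post-composing with $e_1^{\pm 1}$ to reroute any endpoint of $e$ equal to $v_2$ back to $v_1$. One checks directly that $r \circ s \simeq \mathrm{id}_\HH$ and $s \circ r \simeq \mathrm{id}_\GG$ via the deformation retraction that collapses $e_1$ onto $v_1$, so $r$ and $s$ are homotopy inverses.

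At the level of cellular 1-cochains, $s^* : C^1(\GG,G) \to C^1(\HH,G)$ takes the form $(s^* z)(e) = z(e_1)^{\epsilon_1(e)}\, z(e)\, z(e_1)^{\epsilon_2(e)}$, where the exponents $\epsilon_1(e), \epsilon_2(e) \in \{-1,0,1\}$ record which endpoints of $e$ in $\GG$ required rerouting through $e_1$. I would then consider the auxiliary map
\be
\Phi : C^1(\GG, G) \longrightarrow C^1(\HH, G) \times G, \qquad z \longmapsto (s^* z,\; z(e_1)),
\ee
which is a bijection with inverse $(w, h) \mapsto z$ defined by $z(e_1) = h$ and $z(e) = h^{-\epsilon_1(e)}\, w(e)\, h^{-\epsilon_2(e)}$. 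For each fixed $h \in G$ this inverse is a coordinate-wise composition of left- and right-translations by powers of $h$, which preserves Haar on $G^{|E(\HH)|}$; integrating over $h$ via Fubini identifies the normalized Haar measure on $C^1(\GG, G)$ with the product Haar measure on $C^1(\HH, G) \times G$. Projecting to the first factor then shows that $s^*$ is measure-preserving at the cochain level.

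Descent to $H^1$ is formal: a short bookkeeping check shows that if $z' = c \cdot z$ is a coboundary for some $c: V(\GG) \to G$, then $s^* z' = c' \cdot s^* z$ with $c'(v) = c(v_1)$ and $c'(u) = c(u)$ otherwise, so $s^*$ descends to a map $H^1(\GG, G) \to H^1(\HH, G)$ commuting with the quotient maps $\pi_\GG, \pi_\HH$. For any measurable $U \subseteq H^1(\HH, G)$,
\be
\mu_{H^1(\GG)}((s^*)^{-1}(U)) = \mu_{C^1(\GG)}((s^*)^{-1}\pi_\HH^{-1}(U)) = \mu_{C^1(\HH)}(\pi_\HH^{-1}(U)) = \mu_{H^1(\HH)}(U),
\ee
so $s^*$ is measure-preserving on $H^1$, and the claim for $r^* = (s^*)^{-1}$ follows. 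The main obstacle is keeping track of the exponents $\epsilon_i(e)$ and verifying that $\Phi$ is genuinely a bijection that preserves normalized Haar; once that is set up, both descent to $H^1$ and the measure-preservation there reduce to the left- and right-invariance of Haar on $G$ combined with Fubini.
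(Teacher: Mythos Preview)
Your proof is correct and follows essentially the same approach as the paper: build the explicit homotopy inverse $s$ by rerouting edges through $e_1$, compute the induced cochain pullback, and verify Haar-preservation via left/right translation invariance. The only organizational difference is that you package the Haar argument through the auxiliary bijection $\Phi$ and Fubini, whereas the paper writes out the integral substitutions directly and handles $r^*$ separately by first passing to a cohomologous cocycle with $g_1 = 1$; your $\epsilon_1,\epsilon_2$ bookkeeping is in fact slightly more careful about loops at $v_2$ than the paper's explicit formulas.
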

\begin{proof}
Say $e_1$ is between vertices $U$ and $V$, with $U \neq V$.  Say $e_2,...,e_m$ are the other edges containing $V$, and $e_{m+1},...,e_n$ are the remaining edges.  In $\HH$, name the corresponding vertex $V^\p$ and edges $e_2^\p, \ldots, e_n^\p$.

Let $r : \GG \to \HH$ be the quotient map.  Let $s$ be a map taking $e_i^\p$ to $e_1e_i$ for $2 \leq i \leq m$ and to $e_i$ for $m+1 \leq i \leq n$.  It's easy to see that $sr$ and $rs$ are homotopy equivalent to the identity maps on $\GG$ and $\HH$, respectively.

\be 
\xymatrix{
\includegraphics[scale=.33]{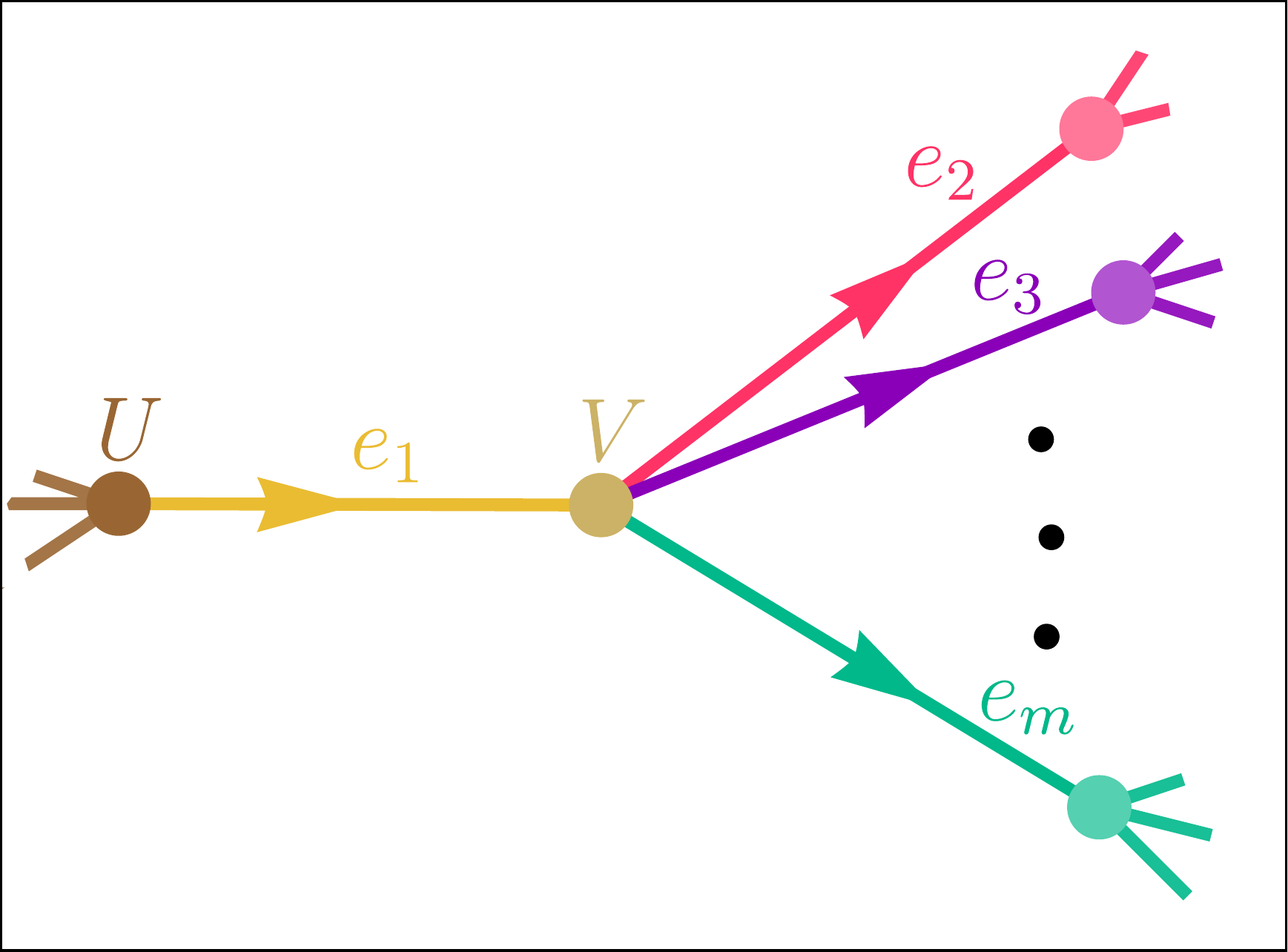} \ \ \ \ar[r]^r & \ \ \ \includegraphics[scale=.33]{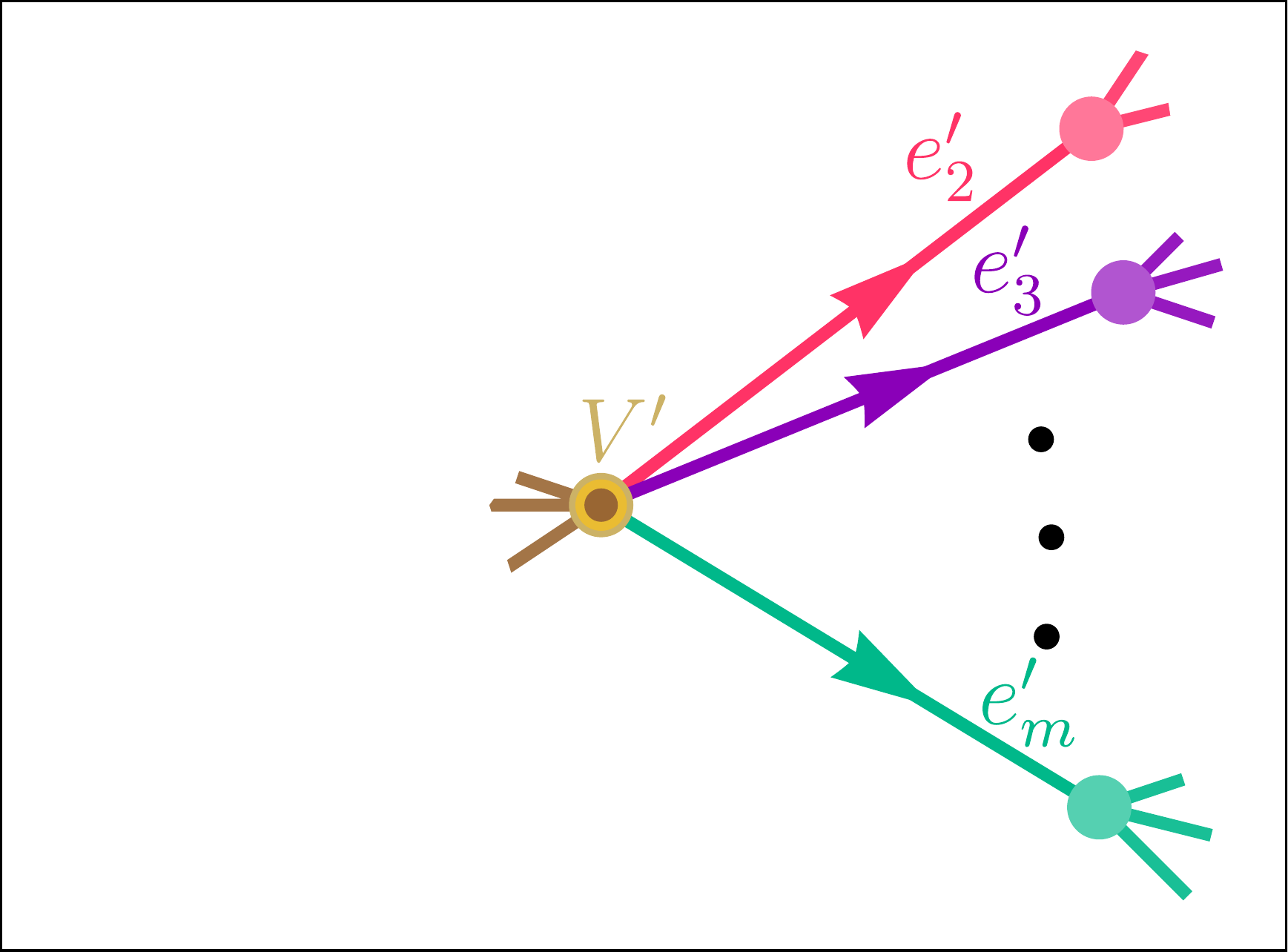}
}
\ee
\be
\xymatrix{
\includegraphics[scale=.33]{equivalence1.pdf} \ \ \ & \ \ \ \includegraphics[scale=.33]{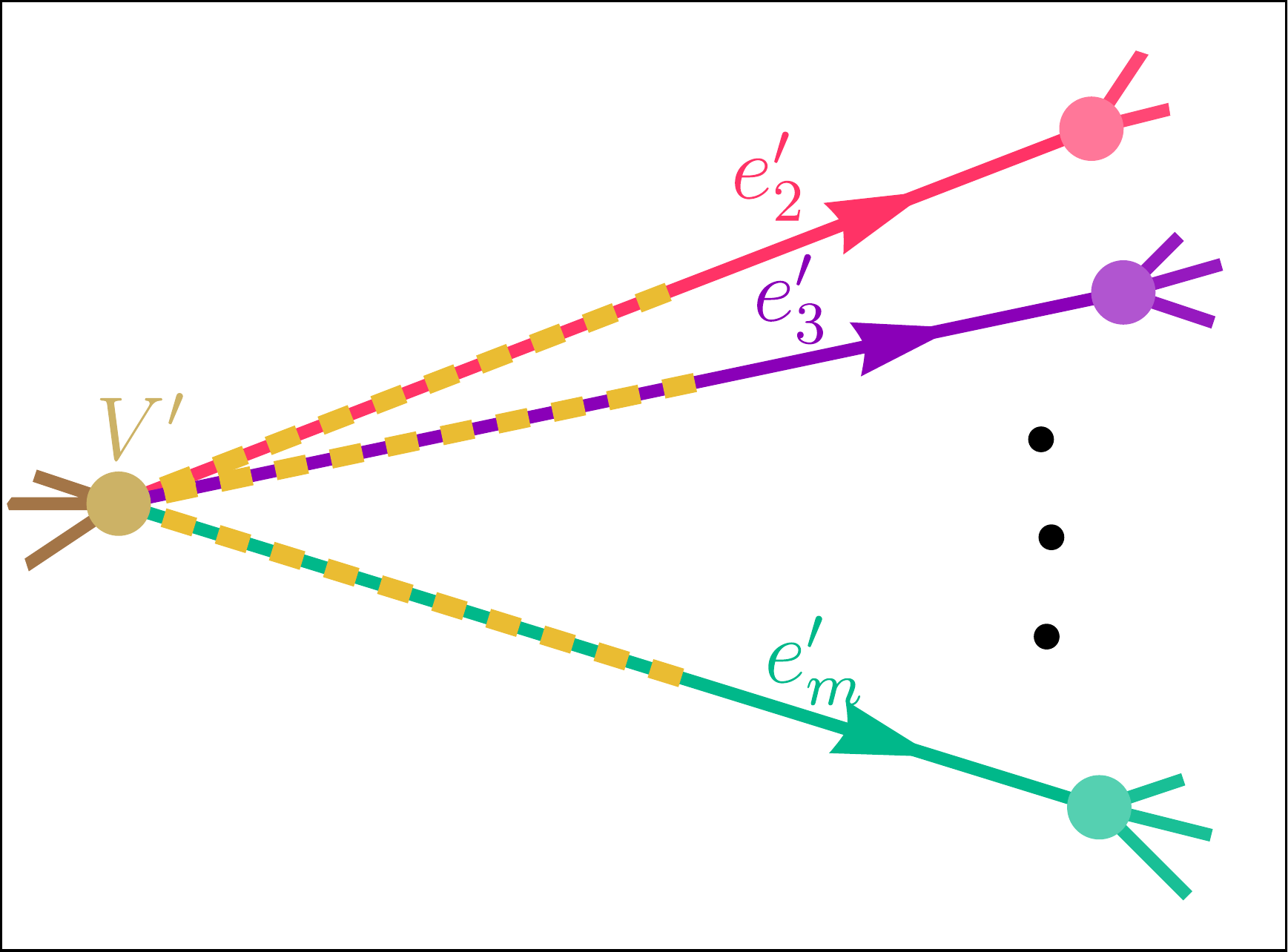} \ar[l]_s
}
\ee

Now we look at what these maps do to cocycles (with coefficients in $G$).  In order to write down cocycles explicitly in non-abelian cohomology, we must choose directions for the edges; for convenience, choose $e_1$ to culminate in $V$, $e_2,...,e_m$ to emanate from $V$, and $e_2^\p,...,e_m^\p$ to emanate from $V^\p$.  For the remaining edges, put the same (arbitrary) direction on $e_i$ and $e_i^\p$.  We then have
\bea
r^*(g_2,...,g_n) &=& (1,g_2,...,g_n)\nn\\
s^*(g_1,...,g_n) &=& (g_1g_2,...,g_1g_m,g_{m+1},...,g_n).
\eea

Say $f : H^1(\GG, G) \to \C$ is a measurable function.  In $H^1(\GG, G)$, the cocycles $(g_1,...,g_p)$ and $(1,g_1^{-1}g_2,...,g_1^{-1}g_m,g_{m+1},...,g_n)$ are cohomologous.  We have
\bea
\int_{G^n} f(g_1,...,g_p)d\vec{g}
&=& \int_{G^n} f(1,g_1^{-1}g_2,...,g_1^{-1}g_m,g_{m+1},...,g_n)d\vec{g}\nn\\
&=& \int_G \int_{G^{n-1}} f(1,g_2',...,g_n')d\vec{g'}dg_1\nn\\
&=& \int_{G^{n-1}} f(1,g_2',...,g_n')d\vec{g'}\nn\\
&=& \int_{G^{n-1}} f(r^*(g_2',...,g_n'))d\vec{g'},
\eea
provided one of the integrals converges.  Similarly,
\bea
\int_{G^n} f(s^*(g_1,...,g_p))d\vec{g}
&=& \int_{G^n} f(g_1g_2,...,g_1g_m,g_{m+1},...,g_n)d\vec{g}\nn\\
&=& \int_G \int_{G^{n-1}} f(g_2',...,g_n')d\vec{g'}dg_1\nn\\
&=& \int_{G^{n-1}} f(g_2',...,g_n')d\vec{g'}.
\eea
Thus, $r^*$ and $s^*$ are measure-preserving.
\end{proof}

\begin{cor}\label{littlecrap}
Let $\GG$ be a finite connected graph.  Then, there is a homotopy equivalence
\be 
\xymatrix{
\GG \ar@/^.5pc/[r]^r & \ar@/^.5pc/[l]^s \HH
}
\ee
so that $\HH$ has only one vertex and the induced isomorphisms on non-abelian cohomology
\be 
\xymatrix{
H^1(\GG,G) \ar@/_.5pc/[r]_{s^*} & \ar@/_.5pc/[l]_{r^*} H^1(\HH,G)
}
\ee
are measure-preserving.
\end{cor}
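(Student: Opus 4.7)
The plan is to iterate Proposition \ref{oneedge}, using induction on the number of vertices of $\GG$. The base case is when $\GG$ already has only one vertex, in which case take $\HH = \GG$ with $r = s = \mathrm{id}$; the induced maps on $H^1$ are trivially measure-preserving.

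For the inductive step, suppose $\GG$ has $k \geq 2$ vertices. Since $\GG$ is finite, connected, and has more than one vertex, it must contain at least one edge $e_1$ whose endpoints are distinct (otherwise every edge is a loop, and removing all edges would leave $\GG$ disconnected unless $k=1$). Apply Proposition \ref{oneedge} to this edge to obtain a finite graph $\GG'$ (the quotient by $e_1$) with $k-1$ vertices, together with homotopy equivalences
\be
\xymatrix{ \GG \ar@/^.5pc/[r]^{r_1} & \ar@/^.5pc/[l]^{s_1} \GG' }
\ee
whose induced maps $r_1^*, s_1^*$ on non-abelian cohomology are measure-preserving. The quotient $\GG'$ is still connected (the continuous image of a connected space) and finite, so the inductive hypothesis applies: there is a homotopy equivalence between $\GG'$ and a one-vertex graph $\HH$ with measure-preserving induced maps $r_2^*, s_2^*$ on $H^1$.

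Compose: set $r := r_2 \circ r_1 : \GG \to \HH$ and $s := s_1 \circ s_2 : \HH \to \GG$. Compositions of homotopy equivalences are homotopy equivalences, and compositions of measure-preserving maps are measure-preserving, so the induced maps $s^* = s_1^* \circ s_2^*$ and $r^* = r_2^* \circ r_1^*$ between $H^1(\GG,G)$ and $H^1(\HH,G)$ are measure-preserving as required.

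The only point that needs care is the selection of a contractible edge at each step; I expect this to be the only real obstacle, but it is dispatched by the observation above that a finite connected graph with more than one vertex always contains a non-loop edge. Everything else is bookkeeping that rides on Proposition \ref{oneedge}.
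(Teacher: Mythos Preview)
Your proof is correct and follows essentially the same approach as the paper: iterate Proposition \ref{oneedge} to collapse $\GG$ down to a single vertex, composing the resulting homotopy equivalences and measure-preserving maps. The paper phrases this as ``choose a spanning tree and contract its edges one by one,'' while you phrase it as induction on the number of vertices; these are the same argument (a minor slip: by contravariance $(s_1\circ s_2)^* = s_2^*\circ s_1^*$, not $s_1^*\circ s_2^*$, but this does not affect the conclusion).
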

\begin{proof}
Choose a spanning tree for $\GG$ and apply Proposition \ref{oneedge} repeatedly until every edge in the spanning tree has been contracted.
\end{proof}

\begin{lem}\label{measurepreserving}
For any homotopy equivalence of finite connected graphs 
\be 
\xymatrix{
\GG_1 \ar[r]^h & \GG_2
},
\ee
the induced isomorphism on non-abelian cohomology
\be 
\xymatrix{
H^1(\GG_1,G) & \ar[l]_{h^*} H^1(\GG_2,G)
}
\ee
is in fact measure-preserving.
\end{lem}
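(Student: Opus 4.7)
The plan is to reduce to the case of wedges of circles using Corollary \ref{littlecrap}, then to invoke Nielsen's classification of $\Aut(F_n)$. First I would apply Corollary \ref{littlecrap} to each $\GG_i$, obtaining measure-preserving homotopy equivalences from $\GG_i$ to single-vertex graphs $\HH_i$. Because $h$ is a homotopy equivalence, the $\HH_i$ are wedges of the same number $n$ of circles. It then suffices to prove that any homotopy equivalence $\tilde h : \HH_1 \to \HH_2$ between two such wedges induces a measure-preserving map on $H^1(-, G)$.

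For an $n$-fold wedge of circles, $H^1(\HH_i, G) = G^n/G$, where $G$ acts by simultaneous conjugation, and $\tilde h$ is determined up to homotopy by an automorphism $\phi \in \Aut(F_n)$ (the inner-automorphism ambiguity acts trivially on $H^1$). By Nielsen's theorem \cite{nielsen}, $\Aut(F_n)$ is generated by (i) the inversions $x_i \mapsto x_i^{-1}$, (ii) the permutations of the generators, and (iii) the transvection $x_1 \mapsto x_1 x_2$ with other generators fixed. For each generator I would check directly that the induced self-map of $G^n$ preserves Haar measure: (i) follows from invariance of Haar measure under $g \mapsto g^{-1}$; (ii) is trivial; and (iii) follows from left-invariance together with Fubini, since $(g_1, g_2, \ldots, g_n) \mapsto (g_1 g_2, g_2, \ldots, g_n)$ acts on the first coordinate by left-translation by $g_2$ with $g_2$ frozen. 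Because the measure on $H^1 = G^n/G$ is the push-forward of Haar measure, each generator descends to a measure-preserving map on $H^1$, and therefore so does every automorphism of $F_n$ and in particular $\tilde h^*$.

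The main obstacle I anticipate is the bookkeeping in passing from the abstract homotopy equivalence $\tilde h$ to a concrete automorphism of $F_n$: one has to commit to basepoints and to bases of $\pi_1(\HH_i)$, and different choices change the associated automorphism only by composition with inner automorphisms. Fortunately this ambiguity acts trivially on $H^1(-, G) = \Hom(F_n, G)/G$, so $\tilde h^*$ and its measure-preserving property are unambiguous. Everything else is a routine application of the translation-invariance and inversion-invariance of Haar measure.
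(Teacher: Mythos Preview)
Your proposal is correct and follows essentially the same route as the paper: reduce via Corollary~\ref{littlecrap} to one-vertex graphs, identify the induced map with an automorphism of $F_n$, and invoke Nielsen's generators to see that the corresponding substitutions on $G^n$ are Haar-measure-preserving. Your extra care with basepoints and the inner-automorphism ambiguity is a nice touch, but the argument is the same.
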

\begin{proof}
By Lemma \ref{littlecrap}, we can find equivalences
\be 
\xymatrix{
\GG_1 \ar[r]^h & \GG_2 \ar[d]^r\\
\HH_1 \ar[u]^s & \HH_2
}
\ee
so that $\HH_1$ and $\HH_2$ have only one vertex.  The composition $rhs$ is a homotopy equivalence between $\HH_1$ and $\HH_2$.  It defines an isomorphism between their fundamental groups, both of which are the free group on the edges (and so they both have the same number of edges).  Choosing an arbitrary bijection between the edge sets; this map becomes an automorphism of $F_n$.  All automorphisms of $F_n$ are generated by elementary Nielsen transformations on the generating set: permuting the generators, replacing a generator by its inverse, and ``shearing'' a generator by multiplying it by another generator \cite{nielsen, nielsenenglish}.  The corresponding substitutions on $H^1(\HH_1,G) = G^n = H^1(\HH_2,G)$ are valid Haar substitutions.  Thus, in the following diagram, (all the maps are isomorphism and) $(rhs)^* = s^*h^*r^*$, $r^*$, and $s^*$ are measure-preserving.
\be 
\xymatrix{
H^1(\GG_1,G) \ar[d]_{s^*} & H^1(\GG_2,G) \ar[l]_{h^*}\\
H^1(\HH_1,G) & H^1(\HH_2,G) \ar[u]_{r^*}
}
\ee
It follows that $h^*$ is also measure-preserving.
\end{proof}

\begin{thm}\label{measures}
Let $w_1$ and $w_2$ be words giving rise to homeomorphic cell complexes $X(w_i)$, and let $G$ be a compact Hausdorff group.  If $f : G \to \C$ is a measurable function on the group, then
\be \int \limits_{G^{n_1}} f(w_1(\vec{g})) d\vec{g} = \int \limits_{G^{n_2}} f(w_2(\vec{g})) d\vec{g}, \ee
provided one of the integrals converges.  In other words, the measures $\mu_{w_1} = \mu_{w_2}$.
\end{thm}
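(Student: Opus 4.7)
The plan is to interpret each integral $\int_{G^{n_i}} f(w_i(\vec{g}))\,d\vec{g}$ as an integral over the space of $1$-cocycles on $\Sk_1 X(w_i)$ with coefficients in $G$, reducing the theorem to the measure-preserving statement of Lemma \ref{measurepreserving} applied to the homotopy equivalence of $1$-skeletons produced by Lemma \ref{skeleton}.

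First I would reduce to the case when $f$ is a class function. The substitution $\vec{g}\mapsto k^{-1}\vec{g}k$ preserves Haar measure on $G^n$ and sends $w(\vec{g})$ to $k^{-1}w(\vec{g})k$, so $\mu_w$ is conjugation-invariant and $\int f\,d\mu_w = \int \bar f\,d\mu_w$ for the conjugation-average $\bar f(x):=\int_G f(kxk^{-1})\,dk$. Hence it suffices to prove the theorem when $f$ itself is a class function.

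With $f$ a class function, view $\vec{g}\in G^{n_i}$ as a $1$-cocycle on $\Sk_1 X(w_i)$ and $w_i(\vec{g})$ as its holonomy around the word polygon $\delta_i$. The gauge (coboundary) action conjugates this holonomy, so $\vec{g}\mapsto f(w_i(\vec{g}))$ descends to a function $F_i$ on $H^1(\Sk_1 X(w_i),G)$, and by definition of the pushforward measure
\[
\int_{G^{n_i}} f(w_i(\vec{g}))\,d\vec{g} = \int_{H^1(\Sk_1 X(w_i),G)} F_i\,d\mu.
\]
Now invoke Lemma \ref{skeleton} to produce a homotopy equivalence $h:\Sk_1 X(w_1)\to\Sk_1 X(w_2)$ with $h\circ\delta_1\simeq\delta_2$, and Lemma \ref{measurepreserving} to see that $h^{*}:H^1(\Sk_1 X(w_2),G)\to H^1(\Sk_1 X(w_1),G)$ is a measure-preserving bijection. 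The relation $h\circ\delta_1\simeq\delta_2$ forces $w_1(h^{*}\vec{g})$ to be conjugate in $G$ to $w_2(\vec{g})$ for every cocycle $\vec{g}$, so $F_1\circ h^{*} = F_2$, and together with the measure-preserving property of $h^{*}$ this yields the desired equality of integrals.

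The main obstacle is the identification $F_1\circ h^{*} = F_2$: this rests on interpreting the word value $w_i(\vec{g})$ as the holonomy of the cocycle around the free-homotopy class of $\delta_i$, and then using the compatibility $h\circ\delta_1\simeq\delta_2$ furnished by Lemma \ref{skeleton}. Once this identification is made, the rest of the argument is a formal combination of the two prior results.
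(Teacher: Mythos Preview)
Your proposal is correct and follows essentially the same route as the paper: invoke Lemma~\ref{skeleton} to obtain the homotopy equivalence $h$ of $1$-skeletons carrying $\delta_1$ to $\delta_2$, invoke Lemma~\ref{measurepreserving} to see that $h^*$ is measure-preserving on non-abelian cohomology, and then identify the two integrands via the correspondence of word polygons.  The one place where you are more careful than the paper is the preliminary reduction to class functions; the paper asserts that ``the word map $w_1:G^{n_1}\to G$ is constant on cohomological equivalence classes,'' whereas in fact the holonomy around $\delta_1$ is only conjugation-equivariant under the coboundary action, so your averaging step is exactly what is needed to make the descent to $H^1$ rigorous.
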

\begin{proof}
By Lemma \ref{skeleton}, there is a homotopy equivalence between the $1$-skeletons of $X_1:=X(w_1)$ and $X_2:=X(w_2)$:
\be 
h: \Sk_1 X_1 \to \Sk_1 X_2
\ee
under which the word polygons correspond.
By Lemma \ref{measurepreserving}, the induced map
\be 
h^*: H^1(\Sk_1 X_1,G) \to H^1(\Sk_1 X_2,G)
\ee
is measure-preserving.  The word map $w_1 : G^{n_1} \to G$ is constant on cohomological equivalence classes because the word polygon is a cycle.  Therefore,
\be 
\int_{G^{n_1}} f(w_1(\vec{g}))d\vec{g} = \int_{G^{n_2}} f(w_1(h^*(\vec{g})))d\vec{g}.
\ee
By the definition of the maps, $f(w_1(h^*(\vec{g}))) = f((h_* w_1)(\vec{g}))$, where
\be 
h_* : \Pi_1(\Sk_1 X_1) \to \Pi_1(\Sk_2 X_1)
\ee
denotes the induced equivalence of fundamental groupoids.  Because the word polygons correspond under $h$, we have $h_* w_1 = w_2$.  Putting it all together,
\be 
\int_{G^{n_1}} f(w_1(\vec{g}))d\vec{g} = \int_{G^{n_2}} f(w_2(\vec{g}))d\vec{g}.
\ee
\end{proof}


\section{A Representation-Theoretic Perspective}

If $\rho$ is a (continuous) irreducible representation of a compact group $G$, then the $\R$-algebra generated by the matrices in $\rho(G)$ is simple.  By the Artin-Wedderburn theorem, it is isomorphic to a matrix algebra over a division algebra over $\R$.  The division algebras over $\R$ are precisely the real field $\R$, the complex field $\C$, and the quaternion skew field $\Ha$.  We say that $\rho$ is either real, complex, or quaternionic.  The so-called Frobenius-Schur indicator provides an easy way to tell them apart:

\begin{thm}[Frobenius, Schur]
Let $G$ be a compact group equipped with its normalized Haar measure, and let $\rho : G \to \GL(V)$ be an irreducible representation of $G$. Let $\chi^\rho$ be the character of $\rho$.  Now
\be 
v(\chi^\rho) := \int \limits_G \chi^\rho(g^2) dg  = \left\{
     \begin{array}{ll}
       1  & : \rho \mbox{ is real} \\
       0  & : \rho \mbox{ is complex} \\
       -1 & : \rho \mbox{ is quaternionic}
     \end{array};
   \right.
\ee
this quantity is called the Frobenius-Schur indicator.
\end{thm}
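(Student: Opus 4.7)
The plan is to recognize $\int_G \chi^\rho(g^2)\,dg$ as a signed count of invariant bilinear forms on $V$. Start from the decomposition $V \otimes V = \Sym^2 V \oplus \Lambda^2 V$ as $G$-representations, and combine the character identities
\be
\chi^\rho(g)^2 = \chi_{\Sym^2 V}(g) + \chi_{\Lambda^2 V}(g), \qquad \chi^\rho(g^2) = \chi_{\Sym^2 V}(g) - \chi_{\Lambda^2 V}(g).
\ee
Integrating the second identity over $G$ and using that $\int_G \chi^W(g)\,dg = \dim W^G$ for any finite-dimensional continuous representation $W$, I obtain
\be
v(\chi^\rho) = \dim (\Sym^2 V)^G - \dim (\Lambda^2 V)^G,
\ee
so the task reduces to computing dimensions of spaces of invariant symmetric and invariant alternating bilinear forms on $V$.

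Next I would invoke Schur's lemma. Under the natural identification $(V \otimes V)^* \cong \Hom(V, V^*)$, $G$-invariant bilinear forms on $V$ correspond to $G$-equivariant maps $V \to V^*$. Since $V$ and $V^*$ are both irreducible, this $\Hom$-space has dimension $1$ if $V \cong V^*$ and dimension $0$ otherwise. When it has dimension $1$, the involution on bilinear forms given by transposition $B(x,y)\mapsto B(y,x)$ acts on a one-dimensional space, so the unique invariant form (up to scalar) is either symmetric or alternating. This forces $v(\chi^\rho) \in \{-1, 0, 1\}$, taking the value $0$ precisely when $V \not\cong V^*$.

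The main obstacle is identifying the three cases (invariant symmetric form, no invariant form, invariant alternating form) with the three Artin--Wedderburn types (real, complex, quaternionic). For this I would argue that the real $\R$-algebra $A := \R\langle \rho(G)\rangle \subseteq \End_\R(V)$ and its commutant $A' = \End_{\R G}(V)$ are related by the double commutant theorem for compact groups, with $A'$ being a division algebra over $\R$ (as the endomorphism ring of an irreducible representation), hence one of $\R$, $\C$, $\Ha$. In the real case $A' = \R$, the representation is defined over $\R$, and the complex $G$-invariant inner product pairs with complex conjugation to yield a symmetric invariant bilinear form, giving $v = 1$. In the quaternionic case $A' = \Ha$, a choice of quaternionic unit $J \in A'$ with $J^2 = -1$ is $\C$-antilinear, and composing it with the inner product yields an alternating invariant bilinear form, giving $v = -1$. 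In the complex case $A' = \C$, no nonzero $G$-equivariant map $V \to V^*$ exists (since such a map would enlarge $A'$ beyond $\C$), giving $v = 0$. This trichotomy matches the three values computed above, completing the proof.
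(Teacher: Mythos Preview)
The paper does not prove this theorem; it is simply quoted as a classical result of Frobenius and Schur and then used as input for later computations. So there is no proof in the paper to compare against.

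Your argument is the standard one and is correct through the identification
\[
v(\chi^\rho) = \dim (\Sym^2 V)^G - \dim (\Lambda^2 V)^G
\]
and the Schur step showing this lies in $\{-1,0,1\}$. The final trichotomy, however, contains an inaccuracy. You assert that $A' = \End_{\R G}(V)$ is a division algebra ``as the endomorphism ring of an irreducible representation'' and that $A' = \R$ in the real case. But $V$ is irreducible over $\C$, not over $\R$; indeed $\End_{\R G}(V)$ always contains multiplication by $i$, so it always contains $\C$ and can never equal $\R$. In the real case one has $V \cong W \otimes_\R \C \cong W \oplus W$ as a real $G$-module, whence $\End_{\R G}(V) \cong M_2(\R)$. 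The cleaner way to finish is to study $G$-equivariant $\C$-\emph{antilinear} maps $J : V \to V$: this space is zero when $V \not\cong V^*$ and one-dimensional otherwise, and after rescaling one has $J^2 = \pm 1$; pairing $J$ with the unitary inner product produces the invariant bilinear form, symmetric when $J^2 = 1$ and alternating when $J^2 = -1$, and the sign of $J^2$ is exactly what distinguishes the Artin--Wedderburn types $M_d(\R)$ and $M_{d/2}(\Ha)$ of $\R\langle \rho(G)\rangle$.
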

In the topological interpretation from before, the word $g^2$ corresponds to a projective plane.  In fact, we are now prepared to begin connecting the two perspectives.

In general, for a group word $w$ in $n$ letters, define the linear functional
\be 
v_w(f) := \int_G f d\mu_w.
\ee
The Frobenius-Schur indicator corresponds to the case of $w(g) = g^2$.

The functional $v_w$, and thus the measure $\mu_w$, is completely determined its value at the irreducible characters:
\be\label{determined} 
v_w(f) = \sum_{\chi \in \hat{G}} \langle f, \chi \rangle v_w(\chi).
\ee
We do not require that $f$ be a class function because $\mu_w$ is uniform on conjugacy classes.

It turns out that the ``indicators'' $\mu_w(\chi)$ already tell use what happens when we integrate the actual matrices in our representation, entrywise, without taking traces.
\begin{lem}\label{wordconj}
Let $\rho : G \to GL(V)$ be an irreducible representation of a compact group $G$.  Then,
\be 
\int_{G^n} \rho(w(\vec{g}))d\vec{g} = \frac{v_w(\chi^\rho)}{\dim{V}} I.
\ee
\end{lem}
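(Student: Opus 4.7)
The plan is to apply Schur's lemma to the matrix-valued average
$$M := \int_{G^n} \rho(w(\vec{g})) \, d\vec{g} \in \End(V).$$
First I will show that $M$ commutes with $\rho(h)$ for every $h \in G$, so Schur forces $M = \lambda I$; then I will pin down $\lambda$ by taking traces. Since $\rho$ is a continuous irreducible representation of a compact group, $V$ is finite-dimensional and Schur's lemma applies in its familiar complex form.

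For the commutation step, fix $h \in G$ and conjugate inside the integral:
$$\rho(h) M \rho(h)^{-1} = \int_{G^n} \rho\bigl(h\, w(g_1,\ldots,g_n)\, h^{-1}\bigr) \, d\vec{g}.$$
The crucial algebraic fact is that, because $w \in F_n$ and conjugation by $h$ is a group homomorphism $G \to G$, the universal property of $F_n$ gives $h w(g_1, \ldots, g_n) h^{-1} = w(h g_1 h^{-1}, \ldots, h g_n h^{-1})$. Substituting $g_i \mapsto h^{-1} g_i h$ in each coordinate and using the conjugation-invariance of the Haar measure on a compact group (which is a consequence of bi-invariance), the integral equals $M$. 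Hence $\rho(h) M = M \rho(h)$ for every $h$, so Schur gives $M = \lambda I$ for some $\lambda \in \C$.

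To identify $\lambda$, take the trace of both sides:
$$\lambda \dim V = \Tr(M) = \int_{G^n} \chi^\rho(w(\vec{g})) \, d\vec{g} = v_w(\chi^\rho),$$
where the first equality uses that $V$ is finite-dimensional (so trace commutes with integration entrywise), and the last is the definition of $v_w$ combined with the definition of $\mu_w$. Solving for $\lambda$ yields the claimed formula.

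I do not anticipate a serious obstacle. The only substantive input beyond Schur's lemma is the functoriality relation $h w(\vec{g}) h^{-1} = w(h \vec{g} h^{-1})$, which is automatic from words being elements of a free group; the remainder is routine bookkeeping with Haar invariance and Fubini.
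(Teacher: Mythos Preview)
Your proof is correct and essentially identical to the paper's: define the matrix integral, show it commutes with $\rho(h)$ via the identity $h\,w(\vec g)\,h^{-1}=w(h\vec g h^{-1})$ and Haar invariance under conjugation, apply Schur's lemma, then read off the scalar by taking traces. The only cosmetic difference is that the paper computes $A\rho(h)$ and rewrites it as $\rho(h)A$, whereas you conjugate $M$ by $\rho(h)$ directly; these are the same computation.
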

\begin{proof}
Set
\be 
A := \int_{G^n} \rho(w(\vec{g}))d\vec{g}.
\ee
A simple change of variable show that the action of $A$ commutes with the action of $G$:
\bea
A \rho(h) &=& \int_{G^n} \rho(w(\vec{g}))\rho(h)d\vec{g}\nn\\
          &=& \int_{G^n} \rho(w(\vec{g})h)d\vec{g}\nn\\
          &=& \int_{G^n} \rho(hw(\vec{h^{-1}gh}))d\vec{g}\nn\\
          &=& \rho(h) \int_{G^n} \rho(w(\vec{h^{-1}gh}))d\vec{g}\nn\\
          &=& \rho(h) \int_{G^n} \rho(w(\vec{g'}))d\vec{g'}\nn\\
          &=& \rho(h) A,
\eea
where we have made the substitutions $g_i' = h^{-1}g_ih$ and have used Haar invariance.  We conclude that $A$ must be an endomorphism of the simple $L^2 G$-module $V$.  By Schur's lemma, $A=\lambda I$, some multiple of the identity.  We may compute $\lambda = \Tr(A)/\dim{V} = v_w(\chi^\rho)/\dim{V}$.
\end{proof}

A similar argument gives us the following:
\begin{lem}\label{wordindicator}
If $G$ is a compact group and $\rho : G \to GL(V)$ is an irreducible representation, then
\be 
\int \limits_G \rho(x g x^{-1}) dx = \frac{\chi^{\rho}(g)}{\dim{V}} I.
\ee
\end{lem}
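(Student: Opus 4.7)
The plan is to follow the same template as Lemma \ref{wordconj}: define $B := \int_G \rho(xgx^{-1})\,dx$, show $B$ is $G$-equivariant, conclude via Schur's lemma that $B = \lambda I$, then compute $\lambda$ by taking the trace.

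First I would verify equivariance. To show $B$ commutes with $\rho(h)$ for every $h \in G$, the natural move is to conjugate: $\rho(h) B \rho(h)^{-1} = \int_G \rho((hx)g(hx)^{-1})\,dx$, and then apply the left-invariance of Haar measure under the substitution $y = hx$ to recover $B$. This gives $\rho(h) B = B \rho(h)$, so $B$ is an endomorphism of $V$ as a $G$-module.

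Next, since $\rho$ is irreducible, Schur's lemma forces $B = \lambda I$ for some scalar $\lambda$. To pin down $\lambda$, I take the trace: $\lambda \dim V = \Tr(B) = \int_G \chi^\rho(xgx^{-1})\,dx = \chi^\rho(g)$, using that the character is a class function and the Haar measure is normalized. Dividing by $\dim V$ yields the claim.

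There isn't really a main obstacle; the entire argument is the Schur-lemma averaging trick already exhibited in the proof of Lemma \ref{wordconj}, specialized to the one-parameter family of conjugations. The only point requiring a little care is the direction of invariance used in the substitution $y = hx$, but left-invariance of Haar measure on a compact group handles this immediately.
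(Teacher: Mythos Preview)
Your proof is correct and follows essentially the same approach as the paper: define the integral, verify $G$-equivariance via a Haar substitution, apply Schur's lemma, and read off the scalar by taking the trace. The only cosmetic difference is that the paper computes $I_g\rho(h)$ directly with the substitution $y=h^{-1}x$, whereas you conjugate and use $y=hx$; both reduce to left-invariance of Haar measure.
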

\begin{proof}
For $g \in G$, define 
\be 
I_g = \int \limits_G \rho(x g x^{-1}) dx.
\ee
It is easy to see that the action of $I_g$ commutes with the action of $G$:
\bea I_g \rho(h) & = & \int \limits_G \rho(x g x^{-1} h) dx \nn\\
 & = &  \int \limits_G \rho(h y g y^{-1}) dy \nn\\
 & = & \rho(h) I_g,
\eea
where we have made the substitution $y = h^{-1} x$ and have used Haar invariance.  We conclude that $I_g$ must be an endomorphism of the simple $L^2 G$-module $V$.  By Schur's lemma, $I_g=\lambda I$, some multiple of the identity.  We may compute $\lambda$ explicitly:
\bea \lambda & = & \frac{\Tr (\lambda I)}{\dim{V}} \nn\\
 & = & \frac{\Tr(I_g)}{\dim{V}} \nn\\
 & = & \frac{\int_G \chi^{\rho}(x g x^{-1}) dx}{\dim{V}} \nn\\
 & = & \frac{\int_G \chi^{\rho}(g) dx}{\dim{V}} \nn\\
 & = & \frac{ \chi^{\rho}(g)}{\dim{V}} 
\eea
\end{proof}

\section{Surfaces and Witten Zeta functions}
\begin{thm}\label{surfaceintegrals}
Let $w$ be a word in $n$ letters that defines a surface (that is, a topological $2$-manifold), and let $\rho : G \to \GL(V)$ be an irreducible representation of some compact group $G$.  Now if $X(w)$ is orientable,
\be \int \limits_{G^n} \rho (w(t)) dt = (\dim V)^{\kappa - 2}I; \ee
if not,
\be \int \limits_{G^n} \rho (w(t)) dt = v(\chi^\rho)^{2 - \kappa} (\dim V)^{\kappa - 2}I, \ee
where $\kappa$ denotes the Euler characteristic of $X(w)$.
\end{thm}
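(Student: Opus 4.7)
My plan is to use Theorem \ref{measures} to reduce to a standard surface word, and then compute directly. Since every entry of the matrix $\rho$ is a measurable function on $G$, applying Theorem \ref{measures} entry-by-entry shows that the matrix integral $\int_{G^n} \rho(w(\vec g))\, d\vec g$ depends only on the homeomorphism type of $X(w)$. By the classification of closed surfaces, I may therefore replace $w$ with one of the standard polygon words: $w = \prod_{i=1}^g [a_i,b_i]$ in the orientable case (so $n = 2g$ and $\kappa = 2-2g$), or $w = \prod_{i=1}^k a_i^2$ in the non-orientable case (so $n = k$ and $\kappa = 2-k$).

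In the non-orientable case the $k$ squares involve disjoint variables, so Fubini factors the integral into $k$ independent copies of $\int_G \rho(a^2)\, da$. By Lemma \ref{wordconj} applied to $w_0(a)=a^2$, each factor equals $\frac{v_{w_0}(\chi^\rho)}{\dim V} I$, and $v_{w_0}(\chi^\rho)$ is exactly the Frobenius-Schur indicator $v(\chi^\rho)$. Multiplying $k$ such scalar matrices gives $v(\chi^\rho)^k (\dim V)^{-k} I$, which matches the claim since $k=2-\kappa$. In the orientable case the $g$ commutators also involve disjoint variables, so I again factor and compute one commutator at a time. Using Lemma \ref{wordindicator},
\[
\int_G \rho(a_i b_i a_i^{-1})\, da_i \;=\; \frac{\chi^\rho(b_i)}{\dim V}\, I,
\]
so $\int_G \rho([a_i,b_i])\, da_i = \frac{\chi^\rho(b_i)}{\dim V}\, \rho(b_i^{-1})$. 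Integrating over $b_i$ and invoking the Schur orthogonality identity $\int_G \chi^\rho(b)\, \rho(b^{-1})\, db = \frac{1}{\dim V} I$ (which follows from the standard matrix-coefficient orthogonality relations for a unitary irreducible representation) yields $\frac{1}{(\dim V)^2} I$ per commutator. Multiplying $g$ of these scalar matrices gives $(\dim V)^{-2g} I = (\dim V)^{\kappa-2} I$, as claimed.

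The computations above are all short, so the only place that genuinely needs care is the orientable case: the single extra Schur orthogonality step that takes $\int_G \chi^\rho(b)\rho(b^{-1})\, db$ to $\frac{1}{\dim V} I$. Everything else is either a direct invocation of Theorem \ref{measures} and Lemmas \ref{wordindicator}--\ref{wordconj}, a Fubini argument using that the standard surface words partition their variables into disjoint blocks, or a bookkeeping check that the exponents match the stated formula via $\kappa = 2-2g$ or $\kappa = 2-k$.
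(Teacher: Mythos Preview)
Your proof is correct and follows essentially the same approach as the paper: reduce via Theorem \ref{measures} and the classification of surfaces to a standard word, factor the integral over disjoint variable blocks, and evaluate each block using Lemmas \ref{wordconj} and \ref{wordindicator} together with Schur orthogonality. The only cosmetic differences are that the paper computes the torus case by taking the trace first and then invoking Lemma \ref{wordconj}, and that for the non-orientable case the paper uses the normal forms $P\#T^{\#\gamma}$ and $P\#P\#T^{\#\gamma}$ rather than your (cleaner) $\prod_{i=1}^k a_i^2$.
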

\begin{proof}
We start with the case of the torus $T=S^1 \times S^1$:
\bea
\int \limits_{G^2} \chi^{\rho} (t_1 t_2 t_1^{-1} t_2^{-1}) dt_1 dt_2 & = & \Tr \int \limits_{G^2} \rho (t_1 t_2 t_1^{-1} t_2^{-1}) dt_1 dt_2 \nn\\
 & = & \Tr \int \limits_{G} \left( \int \limits_{G} \rho (t_1 t_2 t_1^{-1}) dt_1 \right) \rho(t_2^{-1}) dt_2 \nn\\
 & = & \Tr \int \limits_{G} I_{t_2} \rho(t_2^{-1}) dt_2 \nn\\
 & = & \frac{1}{\dim V} \Tr \int \limits_{G} \chi^{\rho}(t_2) \rho(t_2^{-1}) dt_2 \nn\\
 & = & \frac{1}{\dim V} \int \limits_{G} \chi^{\rho}(t_2) \overline{\chi^{\rho}(t_2)} dt_2 \nn\\
 & = & \frac{\langle \chi^{\rho}, \chi^{\rho} \rangle}{\dim V} \nn\\
 & = & \frac{1}{\dim V}.
\eea

By Lemma \ref{wordconj}, we in fact have
\be \int \limits_{G^2} \rho (t_1 t_2 t_1^{-1} t_2^{-1}) dt_1 dt_2 = \frac{1}{(\dim V)^2}I. \ee


The classification of surfaces implies that an orientable $X(w)$ can be written as a connected sum of tori:
\be X(w) = T \# T \# \cdots \# T = T^{\# k}. \ee
To form the connected sum of two words, we simply multiply them in the free product of their respective free groups.

Earlier propositions allow us to verify the claim for a single word representing each genus.  Let us represent the genus $\gamma$ surface by the word
\be w(t_1, t_2, \ldots, t_{2\gamma}) = [t_1,t_2][t_3,t_4] \cdots [t_{2\gamma-1},t_{2\gamma}]. \ee
Now we split the $T^{\# \gamma}$ case into $\gamma$ copies of the $T$ case:
\bea 
\int \limits_{G^{2\gamma}} \rho([t_1,t_2][t_3,t_4] \cdots [t_{2\gamma-1},t_{2\gamma}]) dt & = & \int \limits_{G^{2\gamma}} \rho([t_1,t_2])\rho([t_3,t_4]) \cdots \rho([t_{2\gamma-1},t_{2\gamma}]) dt \\
& = & \prod_{j=1}^{\gamma} \int \limits_{G} \int \limits_{G} \rho([t_{2j-1},t_{2j}]) dt_{2j-1} dt_{2j}\\
& = & \prod_{j=1}^{\gamma} (\dim V)^{-2} \\
& = & (\dim V)^{-2\gamma}
\eea
A non-orientable surface is given by a connected sum of the form
\be X(w) = P \# T^{\# \gamma} \mbox{           or           } X(w) = P \# P \# T^{\# \gamma}, \ee
where $P$ is a projective plane.  It is easy to see, however, that the theorem of Frobenius and Schur allows us to apply the same technique in these cases to obtain the required result.
\end{proof}

Putting this together with earlier calculations (\ref{determined}), we have
\begin{cor}\label{functional}
For a word $w$ with $X(w)$ a surface of Euler characteristic $\kappa$, and any measurable $f: G \to \C$,
\be 
v_w(f) = \sum_{\rho \in \hat{G}} \langle f, \chi^\rho \rangle (\dim \rho)^{\kappa - 1}
\ee
if $X(w)$ is orientable, and 
\be 
v_w(f) = \sum_{\rho \in \hat{G}} \langle f, \chi^\rho \rangle v(\chi^\rho)^{2 - \kappa} (\dim \rho)^{\kappa - 1}
\ee
if $X(w)$ is non-orientable, so long as the sum converges.
\end{cor}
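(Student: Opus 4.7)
The plan is to assemble the corollary from the two pieces already in hand: the Peter--Weyl style expansion (\ref{determined}) and the matrix-valued identities in Theorem \ref{surfaceintegrals}. Conceptually, (\ref{determined}) reduces the problem of describing $v_w$ to computing the single sequence of scalars $v_w(\chi^\rho)$ as $\rho$ ranges over $\hat G$, and Theorem \ref{surfaceintegrals} lets me read these scalars off directly.

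First I would take the trace of the identities in Theorem \ref{surfaceintegrals}. In the orientable case,
\begin{equation}
v_w(\chi^\rho) = \Tr\!\int_{G^n}\rho(w(t))\,dt = \Tr\!\left((\dim V)^{\kappa-2}I\right) = (\dim V)^{\kappa-1},
\end{equation}
and in the non-orientable case the same computation, with the extra $v(\chi^\rho)^{2-\kappa}$ factor, gives
\begin{equation}
v_w(\chi^\rho) = v(\chi^\rho)^{2-\kappa}(\dim V)^{\kappa-1}.
\end{equation}
Substituting these scalars into (\ref{determined}) produces the two claimed formulas immediately.

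The only point that requires a little care is the convergence clause. I would note that, since $\mu_w$ is a finite (in fact probability) measure on $G$, the functional $v_w$ extends to $L^1(G,\mu_w)$, and the sum in (\ref{determined}) is the $L^2(G)$ Fourier expansion of $f$ evaluated against $v_w$; so the equality holds whenever the displayed sum converges absolutely, which is the hypothesis of the corollary. I would also remark that $v_w$ being constant on conjugacy classes (hence no class-function hypothesis on $f$ is needed) was already justified in the discussion preceding (\ref{determined}), so nothing further is required there.

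I do not anticipate a genuine obstacle: the corollary is essentially a bookkeeping step combining two results already proved. The most error-prone piece is just tracking the exponent $\kappa-1$ versus $\kappa-2$ correctly when passing from the matrix identity (which gives $(\dim V)^{\kappa-2}I$) to its trace (which contributes an extra factor of $\dim V$), so I would verify this on the torus case $\kappa=0$ as a sanity check: there $v_w(\chi^\rho) = (\dim V)^{-1}$, matching the explicit torus computation inside the proof of Theorem \ref{surfaceintegrals}.
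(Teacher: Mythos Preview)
Your proposal is correct and matches the paper's approach exactly: the paper simply says ``putting this together with earlier calculations (\ref{determined})'' and states the corollary, which is precisely your plan of taking traces in Theorem~\ref{surfaceintegrals} to obtain $v_w(\chi^\rho)$ and substituting into (\ref{determined}). Your extra remarks on convergence and the torus sanity check are reasonable elaborations on what the paper leaves implicit.
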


Similar formulas were derived by Witten for compact Lie groups in the context of quantum gauge theory \cite{witten}.  Witten goes on to define what is now known as the Witten zeta function of a compact group:
\be \zeta_{G}(s) = \sum_{\rho \in \hat{G}} (\dim \rho)^{-s}. \ee
Special values of this zeta function arise if we plug in the Dirac delta function---that is, the character of the regular representation---for $f$ in the orientable case of Corollary \ref{functional}:
\bea 
v_w(\delta_1) &=& \sum_{\rho \in \hat{G}} \langle \delta_1, \chi^\rho \rangle (\dim \rho)^{\kappa - 1}\nn\\
              &=& \sum_{\rho \in \hat{G}} \chi^\rho(1) (\dim \rho)^{\kappa - 1}\nn\\
              &=& \sum_{\rho \in \hat{G}} (\dim \rho)^{\kappa}\nn\\
              &=& \zeta_G(-\kappa).
\eea
This doesn't make sense beyond the case when $G$ is finite.  
This is fixed by going backwards: If $\sum_{\rho \in \hat{G}} (\dim \rho)^\kappa$ converges, the function
\be 
\zeta_G(-\kappa; g) := \sum_{\rho \in \hat{G}} \chi^\rho(g) (\dim \rho)^\kappa
\ee
is (as a function of $g$) a Radon-Nikodym derivative for $\mu_w$ with respect to $\mu$, and it is continuous.  So it is the unique\footnote{Consider two continuous Radon-Nikodym derivatives.  They are equal almost everywhere.  But, if they were unequal at any point, they would differ at an open set about that point, and nonempty open sets in $G$ have positive measure.  So they're equal.} continuous Radon-Nikodym derivative $\mu_w/\mu$.  In the language of distribution theory, $\zeta_G(\kappa; g)$ is the limit of $v_w(f_j)$ over any sequence $\langle f_j \rangle$ of $L^\infty$-functions approximating a point of mass one at $g$.  In particular, at the identity, we have
\be 
(\mu_w/\mu)(1) = \zeta_G(-\kappa).
\ee
In the finite case, we write this formula combinatorially:
\be\label{combinatron}
\gamma_G(w) = |G|^{n-1} v_w(\delta_1) = |G|^{n-1} \zeta_G(-\kappa).
\ee

\section{Statistical Group Theory}
Returning to our original example of counting commuting pairs in $Q_8$, we need only recall that the dimensions of the irreducible representations are $\{1, 1, 1, 1, 2\}$.  The Witten zeta function can be written explicitly:
\be \zeta_{Q_3}(s) = 1^{-s} + 1^{-s} + 1^{-s} + 1^{-s} + 2^{-s}. \ee
We recover not only the result that there are $40$ ways to satisfy the commutator word
\be |Q_8| \zeta_{Q_8}(0) = 8(1^0 + 1^0 + 1^0 + 1^0 + 2^{0}) = 40, \ee
but also that a product of three separate commutators can be satisfied in
\be |Q_8|^5 \zeta_{Q_8}(4) = 8^5(1^{-4} +  1^{-4} + 1^{-4} + 1^{-4} + 2^{-4}) = 133120 \ee
different ways.  

In the introduction, we mentioned several other ways to check the degree to which $Q_8$ is abelian.  It is easy to see that these quantities may also be calculated in terms of the Witten zeta function:
\begin{itemize}
\item[(1)] the index of the derived subgroup: \\
\be 
\lim_{s \rightarrow \infty } \zeta_{Q_8} (s) = 4
\ee
\item[(2)] the average size of a conjugacy class:
\be 
\frac{|Q_8|}{\zeta_{Q_8}(0)} = \frac{8}{5}
\ee
\item[(3)] the dimensions of its largest irreducible representations:
\be
D := \mbox{largest dimension occurring} =  \exp\left(-\lim_{s \rightarrow -\infty} \log (\zeta_{Q_8} (s))/s \right)=2
\ee
\be 
\mbox{number of distinct $D$-dimensional irreducibles } = \lim_{s \rightarrow -\infty} \zeta_{Q_8}(s)/D^{-s}=1.
\ee
\end{itemize}

We now give an application of the theorem in the infinite case.

\begin{cor} Let $G=\SO(3)$.  The word $w(t_1,t_2,t_3,t_4) = [t_1,t_2][t_3,t_4]$ provides a measure-preserving map from the probability space $G^4$ to $G$.  The resulting measure on $G$ is absolutely continuous with respect to Haar measure, and the continuous Radon-Nikodym derivative at the identity element is
\be \frac{\pi^2}{8}. \ee
\end{cor}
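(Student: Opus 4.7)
The plan is to recognize $X(w)$ for $w=[t_1,t_2][t_3,t_4]$ as the closed orientable surface of genus $2$, then apply the machinery already built up to reduce the corollary to an explicit evaluation of the Witten zeta function of $\SO(3)$.

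First I would identify the topology: gluing the boundary of an octagon according to the standard relator $[a,b][c,d]$ gives the genus-$2$ surface, so $X(w)$ is a closed orientable $2$-manifold with Euler characteristic $\kappa = 2-2g = -2$. By Theorem \ref{surfaceintegrals} and the discussion following Corollary \ref{functional}, the measure $\mu_w$ on $\SO(3)$ admits a continuous Radon--Nikodym derivative with respect to Haar measure (once convergence of the relevant series is established), and its value at the identity is
\begin{equation}
(\mu_w/\mu)(1) \;=\; \zeta_{\SO(3)}(-\kappa) \;=\; \zeta_{\SO(3)}(2).
\end{equation}

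Next I would invoke the standard representation theory of $\SO(3)$: the continuous irreducibles are indexed by nonnegative integers $\ell \geq 0$, with $\dim \rho_\ell = 2\ell+1$. Therefore
\begin{equation}
\zeta_{\SO(3)}(s) \;=\; \sum_{\ell=0}^{\infty} (2\ell+1)^{-s},
\end{equation}
which converges absolutely for $s=2$, justifying the absolute continuity claim (the truncated partial sums are $L^\infty$ approximations whose limit as a distribution recovers the Radon--Nikodym derivative, as explained just before equation (\ref{combinatron})).

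Finally I would evaluate the sum. Using $\sum_{n=1}^\infty n^{-2} = \pi^2/6$ and separating odd and even terms,
\begin{equation}
\sum_{\ell=0}^\infty (2\ell+1)^{-2} \;=\; \sum_{n=1}^\infty n^{-2} - \sum_{n=1}^\infty (2n)^{-2} \;=\; \frac{\pi^2}{6} - \frac{1}{4}\cdot\frac{\pi^2}{6} \;=\; \frac{\pi^2}{8},
\end{equation}
yielding the claimed value. There is essentially no hard step here: once the topology of $X(w)$ is identified and Corollary \ref{functional} is in hand, everything reduces to bookkeeping plus the classical Basel-type identity. The only point that requires a little care is confirming that the series defining $\zeta_{\SO(3)}(2;g)$ converges uniformly enough to give a \emph{continuous} Radon--Nikodym derivative at $g=1$, which follows because $|\chi^{\rho_\ell}(g)|\leq 2\ell+1$ and summability of $(2\ell+1)^{-1}$ is needed only in the distributional limit argument already set up in the text.
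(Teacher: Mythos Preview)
Your proposal is correct and follows essentially the same route as the paper: both reduce to evaluating $\zeta_{\SO(3)}(2)$ as the sum of reciprocal odd squares, obtaining $\tfrac{3}{4}\cdot\tfrac{\pi^2}{6}=\tfrac{\pi^2}{8}$. The paper's version is simply terser, omitting the surface identification and convergence remarks because those are already handled in the text preceding the corollary.
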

\begin{proof}
The rotation group $\SO(3)$ has unique irreducible representations of every odd dimension, and none of even dimension \cite{bro}.  Thus,
\bea 
\zeta_{\SO(3)}(s) &=& \sum_{n \mbox{\begin{scriptsize} odd\end{scriptsize}}} n^{-s}\nn\\
                  &=& \sum_{n=1}^\infty n^{-s} - \sum_{n=1}^\infty (2n)^{-s}\nn\\
                  &=& (1-2^{-s})\zeta(s).
\eea
So
\be 
\zeta_{\SO(3)}(2) = (1-2^{-2})\zeta(2) = \frac{3}{4} \cdot \frac{\pi^2}{6} = \frac{\pi^2}{8}.
\ee
\end{proof}
\section{A Corollary for Representations of Finite Groups}
Here we prove the advertised corollary that, for a finite group $G$, the dimension of an irreducible $\mathbb{C}$-representation of $G$ divides its order.  We begin with some easy number theory.
\begin{lem}\label{divisorsold}
Let $a_1, a_2, \ldots, a_n$ be integers.  In order to show that $d$ divides each $a_i$, it suffices to verify that
\be\label{duh}
a_1^k + a_2^k + \cdots + a_n^k \con 0 \Mod{d^k}
\ee
for all positive integers $k$.
\end{lem}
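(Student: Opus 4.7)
The plan is to handle one prime at a time and then induct on the prime-power exponent. Since $d \mid a_i$ for every $i$ is equivalent to $v_p(a_i) \geq v_p(d)$ holding for every prime $p$, and since the hypothesis $d^k \mid \sum_i a_i^k$ immediately gives $p^{ek} \mid \sum_i a_i^k$ with $e = v_p(d)$, I would first reduce to the case $d = p^e$ for a single prime $p$.

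I would then induct on $e$. For the base case $e = 1$, those $a_i$ already divisible by $p$ contribute summands divisible by $p^k$, so discarding them does not affect the hypothesis; I may thus assume every $a_i$ is coprime to $p$, and it suffices to show that in this situation $n = 0$. Here Euler's theorem is the right tool: setting $k = \phi(p^m)$ gives $a_i^k \equiv 1 \pmod{p^m}$, so $\sum_i a_i^k \equiv n \pmod{p^m}$; because $\phi(p^m) \geq m$, the hypothesis forces $p^m \mid n$ for every $m$, which can happen only if $n = 0$.

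The inductive step is then essentially formal: if $p^{e-1} \mid a_i$ for each $i$, write $a_i = p^{e-1} b_i$; the hypothesis $p^{ek} \mid p^{(e-1)k} \sum_i b_i^k$ becomes $p^k \mid \sum_i b_i^k$ for every $k$, and the base case applied to the $b_i$ yields $p \mid b_i$, i.e.\ $p^e \mid a_i$.

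I expect the only real content of the argument to be the base case, whose key point is the uniform inequality $\phi(p^m) \geq m$; this is what allows Euler's theorem to detect the count $n$ at an arbitrarily high modulus. The remaining steps -- reducing to a single prime and inducting on the exponent -- are routine bookkeeping.
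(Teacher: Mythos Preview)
Your proof is correct and follows essentially the same route as the paper: reduce to a single prime, use Euler's theorem together with the inequality $\phi(p^m)\geq m$ to force the number of terms coprime to $p$ to vanish, and then pass to prime powers by dividing out one factor of $p$ at a time. The only cosmetic differences are that the paper leaves the $p$-divisible terms in the sum (noting they vanish mod $p^n$) rather than discarding them, and it invokes the Chinese remainder theorem at the end rather than $p$-adic valuations at the start.
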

\begin{proof}
Suppose first that $d$ is a prime.  Let $m$ denote the number of $a_i$ not divisible by $d$; we seek to show that $m=0$.  Setting $k=\varphi(d^n)$, 
\be
a_1^{\varphi (d^n)} + a_2^{\varphi (d^n)} + \cdots+  a_n^{\varphi (d^n)} \con 0 \Mod{d^{\varphi (d^n)}}.
\ee
Since $\varphi (d^n) = d^n - d^{n-1} \geq n$,
\be
a_1^{\varphi (d^n)} + a_2^{\varphi (d^n)} + \cdots+  a_n^{\varphi (d^n)} \con 0 \Mod{d^{n}}.
\ee
We now apply Euler's totient theorem to each term on the left; those $a_i$ which are prime to $p$ contribute $1$ while the rest are $0$.  Thus,
\be
m \con 0 \Mod{d^{n}}.
\ee
However, $m \in \{0,1,\ldots, n\}$ and $n < d^n$, so $m=0$.

Consider now the case in which $d$ is a power of some prime $p$.  The condition
\be
a_1^k + a_2^k + \cdots + a_n^k \con 0 \Mod{d^k}
\ee
for all positive integers $k$ implies
\be
a_1^k + a_2^k + \cdots + a_n^k \con 0 \Mod{p^k}
\ee
and so the earlier argument gives us that $p$ divides each $a_i$.  Dividing through by $p$, we may iterate this argument to obtain that $d$ divides each $a_i$.

The general statement of the lemma now follows from the Chinese remainder theorem.
\end{proof}
\begin{cor}\label{divisors}
Let $q_1, q_2, \ldots , q_n$ be rational numbers.  Suppose that
\be
q_1^k + q_2^k + \cdots +  q_n^k \in \mathbb{Z}
\ee
for each positive integer $k$; it follows that each $q_i$ is an integer.
\end{cor}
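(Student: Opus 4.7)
The plan is to reduce Corollary \ref{divisors} to Lemma \ref{divisorsold} by clearing denominators in a uniform way. Let $D$ be any positive common multiple of the denominators of $q_1, \ldots, q_n$, so that $a_i := D q_i \in \mathbb{Z}$ for every $i$. I would then observe that the identity
\begin{equation*}
a_1^k + a_2^k + \cdots + a_n^k = D^k \bigl( q_1^k + q_2^k + \cdots + q_n^k \bigr)
\end{equation*}
together with the hypothesis that the right-hand parenthesized sum is an integer, shows that
\begin{equation*}
a_1^k + a_2^k + \cdots + a_n^k \equiv 0 \Mod{D^k}
\end{equation*}
for every positive integer $k$. Lemma \ref{divisorsold} (applied with $d = D$) then yields $D \mid a_i$ for each $i$, and hence $q_i = a_i/D \in \mathbb{Z}$.

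There is essentially no obstacle here: all the work has been done in Lemma \ref{divisorsold}. The only sanity check is that $D$ exists and is an honest positive integer (which it is, since $n$ is finite), and that the factorization $a_i^k = D^k q_i^k$ is valid term-by-term, which is immediate.
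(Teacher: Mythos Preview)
Your argument is correct and is exactly the approach the paper takes: the paper's proof reads ``Clear denominators and apply Lemma \ref{divisorsold},'' and you have simply spelled this out in detail.
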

\begin{proof}
Clear denominators and apply Lemma \ref{divisorsold}.
\end{proof}

\begin{thm}\label{classical}
If $G$ is a finite group, and $V$ is an irreducible representation of $G$ over $\C$, then $\dim V$ divides $|G|$.
\end{thm}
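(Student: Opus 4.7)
The plan is to apply Corollary~\ref{divisors} to a carefully weighted multiset $M$ of positive rationals indexed by $\hat{G}$. Setting $d_\rho = \dim \rho$, I would let $M$ consist of $|G|/d_\rho$ with multiplicity $d_\rho^{2}$ for each $\rho \in \hat{G}$; the corollary will then conclude that each $|G|/d_\rho$ is an integer, which is precisely the statement $d_\rho \mid |G|$.

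First I would write the $k$-th power sum of $M$ as
\[
P_k \;=\; \sum_{\rho \in \hat{G}} d_\rho^{2} \left(\frac{|G|}{d_\rho}\right)^{k} \;=\; |G|^{k} \sum_{\rho \in \hat{G}} d_\rho^{\,2-k}.
\]
At $k=0$ this is just $\sum_{\rho} d_\rho^{2} = |G|$. For $k = 2\gamma$ with $\gamma \geq 1$, I would apply Theorem~\ref{surfaceintegrals} to the standard orientable genus-$\gamma$ word $w_\gamma = \prod_{i=1}^{\gamma}[t_{2i-1}, t_{2i}]$ and pair Corollary~\ref{functional} against $\delta_1$, yielding $\gamma_G(w_\gamma) = |G|^{2\gamma - 1}\sum_\rho d_\rho^{\,2-2\gamma}$. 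Since $\gamma_G(w_\gamma)$ is a nonnegative integer count of solutions, I conclude
\[
P_{2\gamma} \;=\; |G|\cdot\gamma_G(w_\gamma) \;\in\; \mathbb{Z},
\]
so $P_k \in \mathbb{Z}$ for every even $k \geq 0$.

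Next I would invoke Corollary~\ref{divisors}, noting that its proof (via Lemma~\ref{divisorsold}) only ever tests the divisibility condition at $k = \varphi(d^{n})$ for large $n$, and such $k$ are always even once $d \geq 2$ and $n \geq 2$. Hence the integrality of $P_k$ at even $k$ is sufficient input, and the corollary forces each element of $M$, and in particular each $|G|/d_\rho$, into $\mathbb{Z}$.

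The only non-routine move is the choice of multiplicity. A naive application of Corollary~\ref{divisors} to the unweighted list $\{|G|/d_\rho\}_{\rho}$ would require $|G|^{k}\sum_\rho d_\rho^{-k}$ to be an integer; this quantity equals $\gamma_G(w_{k/2+1})/|G|$, which is a priori only known to lie in $(1/|G|)\mathbb{Z}$, and I do not see an elementary route to its integrality. Inserting the multiplicity $d_\rho^{2}$ supplies exactly the missing factor of $|G|$, converting the possibly non-integral rational $\gamma_G(w_\gamma)/|G|$ into the manifestly integral $|G|\cdot\gamma_G(w_\gamma)$, and the argument then goes through with no further obstruction.
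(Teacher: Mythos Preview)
Your argument is correct and follows the same overall strategy as the paper: exhibit a multiset of rationals whose power sums are the solution counts $\gamma_G$ of genus-$\gamma$ surface words, and then invoke Corollary~\ref{divisors}.  The paper, however, chooses a different multiset: it takes $r_\rho = (|G|/d_\rho)^2$ with \emph{uniform} multiplicity $|G|$.  With that choice the $k$-th power sum is exactly $\gamma_G(w_{k+1})$ for \emph{every} positive integer $k$, so Corollary~\ref{divisors} applies as stated, with no need to reopen the proof of Lemma~\ref{divisorsold} and argue that only even exponents are tested.  The price is a small extra step at the end---one must observe that a rational whose square is an integer is itself an integer---which the paper leaves implicit.

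Your even-$k$ workaround is legitimate (the exponents used in Lemma~\ref{divisorsold} are $\varphi(p^{n})$ for primes $p$, and here $n=\sum_\rho d_\rho^{2}=|G|\ge 2$ makes these even), but the phrase ``for large $n$'' is imprecise: the relevant $n$ is the fixed size $|G|$ of your multiset, not a parameter going to infinity.  If you want to avoid peeking inside the lemma altogether, simply square your rationals as the paper does; the multiplicity $|G|$ then arises naturally, and the argument becomes self-contained.
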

\begin{proof}
We apply (\ref{combinatron}) to an orientable surface of genus $k+1$:
\bea
\gamma_G(w_{k+1}) &=& |G|^{2k+1} \zeta_G(2k)\nn\\
                   &=& |G|^{2k+1} \sum_{\rho \in \hat{G}} \frac{1}{(\dim \rho)^{2k}}\nn\\
                   &=& |G| \sum_{\rho \in \hat{G}} \frac{|G|^{2k}}{(\dim \rho)^{2k}}.
\eea
For convenience, define $r_i$ to be $\frac{|G|^2}{(\dim \rho_i)^2}$.  We now observe:
\be 
\gamma_G(w_{k+1}) = \underbrace{r_1^k + r_1^k + \cdots + r_1^k}_{|G|} \,\, + \,\, \underbrace{r_2^k + r_2^k + \cdots + r_2^k}_{|G|} \,\, + \,\, \cdots \,\, + \,\, \underbrace{r_{n}^k + r_n^k + \cdots + r_n^k}_{|G|}
\ee
Since the left side is always an integer, Corollary \ref{divisors} gives that $r_i \in \mathbb{Z}$, and hence the dimension of any irreducible representation of $G$ divides $|G|$.
\end{proof}

\begin{rmk}
Theorem \ref{classical} relies only on Lemma \ref{wordconj}, Lemma \ref{wordindicator}, Corollary \ref{functional}, Lemma \ref{divisorsold}, and Corollary \ref{divisors}, the proofs of which are completely elementary if we restrict attention to finite groups and words that are products of independent commutators.  In particular, it does not rely on facts about algebraic integers, Nielsen's classification of automorphisms of free groups, non-abelian cohomology, Haar measure, Radon-Nikodym derivatives, or the homotopy category.
\end{rmk}

\bibliographystyle{alpha}{}
\bibliography{references}

\end{document}